\title{The entropic barrier is $n$-self-concordant}
\author{Sinho Chewi\footnote{The author thanks S\'ebastien Bubeck for helpful comments. The author was supported by the Department of
Defense (DoD) through the National Defense Science \& Engineering Graduate Fellowship (NDSEG) Program.} \\ \small Massachusetts Institute of Technology (MIT) \\ \small \texttt{schewi@mit.edu}}
\date{\today}
\declaretheorem[name=Lemma]{lem}
\declaretheorem[name=Proposition]{prop}
\declaretheorem[name=Theorem]{thm}
\begin{document}

\maketitle

\begin{abstract}
    For any convex body $K \subseteq \R^n$, S.\ Bubeck and R.\ Eldan introduced the entropic barrier on $K$ and showed that it is a $(1+o(1)) \, n$-self-concordant barrier.
    In this note, we observe that the optimal bound of $n$ on the self-concordance parameter holds as a consequence of the dimensional Brascamp-Lieb inequality.
\end{abstract}


\section{Introduction}

Let $K \subseteq \R^n$ be a convex body.
In~\cite{bubeckeldan2019entropic}, S.\ Bubeck and R.\ Eldan introduced the \emph{entropic barrier} $f^\star : \interior K \to \R$, defined as follows.
First, let $f : \R^n\to\R$ denote the logarithmic Laplace transform of the uniform measure on $K$,
\begin{align}\label{eq:f}
    f(\theta)
    &:= \ln \int_K \exp{\langle \theta, x \rangle} \, \D x\,.
\end{align}
Then, define $f^\star$ to be the Fenchel conjugate of $f$,
\begin{align*}
    f^\star(x)
    &:= \sup_{\theta\in\R^n}\{\langle \theta, x \rangle - f(\theta)\}\,.
\end{align*}
They proved the following result.

\begin{thm}[{\cite[Theorem 1]{bubeckeldan2019entropic}}]\label{thm:entropic_barrier}
    The function $f^\star$ is strictly convex on $\interior K$. Also, the following statements hold.
    \begin{enumerate}
        \item $f^\star$ is self-concordant, i.e.\
            \begin{align*}
                \nabla^3 f^\star(x)[h,h,h] \le 2 \, \abs{\langle h, \nabla^2 f^\star(x) \, h \rangle}^{3/2}\,, \qquad \text{for all}~x \in \interior K\,, \; h \in \R^n\,.
            \end{align*}
        \item $f^\star$ is a $\nu$-self-concordant barrier, i.e.\
            \begin{align*}
                \nabla^2 f^\star(x)
                &\succeq \frac{1}{\nu} \, \nabla f^\star(x) \, {\nabla f^\star(x)}^\T\,, \qquad \text{for all}~x \in \interior K\,,
            \end{align*}
            with $\nu = (1 + o(1)) \, n$.
    \end{enumerate}
\end{thm}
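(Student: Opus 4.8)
First I would handle the barrier estimate (2) by passing to the Fenchel dual. Write $p_\theta$ for the probability measure on $K$ with density proportional to $e^{\langle\theta,\cdot\rangle}$ and let $X\sim p_\theta$. As $K$ is a convex body, $p_\theta$ has full-dimensional support, so $f$ is smooth with $\nabla f(\theta)=\mathbb{E}_{p_\theta}[X]\in\interior K$ and $\nabla^2 f(\theta)=\operatorname{Cov}_{p_\theta}(X)\succ 0$; thus $\nabla f$ is a diffeomorphism of $\R^n$ onto $\interior K$, which already gives strict convexity and smoothness of $f^\star$ on $\interior K$, together with $\nabla^2 f^\star(x)=\nabla^2 f(\theta)^{-1}=\operatorname{Cov}_{p_\theta}(X)^{-1}$ at $x=\nabla f(\theta)$, $\theta=\nabla f^\star(x)$. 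The inequality in (2) then becomes $\operatorname{Cov}_{p_\theta}(X)^{-1}\succeq\tfrac1\nu\,\theta\theta^\T$; since by Cauchy--Schwarz $\langle\theta,h\rangle^2\le\langle\theta,\operatorname{Cov}_{p_\theta}(X)\,\theta\rangle\,\langle h,\operatorname{Cov}_{p_\theta}(X)^{-1}h\rangle$, this holds exactly when $\nu\ge\langle\theta,\operatorname{Cov}_{p_\theta}(X)\,\theta\rangle$ (the choice $h=\operatorname{Cov}_{p_\theta}(X)\,\theta$ gives the converse). Hence (2) reduces to
\begin{align*}
\operatorname{Var}_{p_\theta}\bigl(\langle\theta,X\rangle\bigr)=\langle\theta,\operatorname{Cov}_{p_\theta}(X)\,\theta\rangle\le\nu\qquad\text{for every }\theta\in\R^n,
\end{align*}
and the point is to establish this with the sharp value $\nu=n$ rather than $(1+o(1))\,n$.

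For this variance bound I would exhibit $\operatorname{Var}_{p_\theta}(\langle\theta,X\rangle)-n$ as a manifestly nonpositive quantity by dilating $K$. For $s>0$ put
\begin{align*}
\Phi(s):=\ln\int_{sK}e^{\langle\theta,x\rangle}\,\D x=n\ln s+f(s\theta),
\end{align*}
the second equality via $x=sy$. Differentiating twice, $\Phi''(1)=-n+\langle\theta,\nabla^2 f(\theta)\,\theta\rangle=\operatorname{Var}_{p_\theta}(\langle\theta,X\rangle)-n$, so it suffices to prove that $\Phi$ is concave on $(0,\infty)$. The concavity of $\Phi$ is what the dimensional Brascamp--Lieb inequality buys us here, and concretely I would deduce it from Pr\'ekopa's theorem: the set $C:=\{(x,s)\in\R^n\times(0,\infty):x\in sK\}$ is a convex cone, so $F(x,s):=e^{\langle\theta,x\rangle}\,\mathbbm{1}_C(x,s)$ is log-concave on $\R^{n+1}$, whence its $x$-marginal $s\mapsto\int_{\R^n}F(x,s)\,\D x=e^{\Phi(s)}$ is log-concave, i.e.\ $\Phi$ is concave. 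The gain of exactly $n$ over the trivial bound is carried by the $n\ln s$ Jacobian term — precisely the dimensional refinement that the ordinary Brascamp--Lieb inequality (applied to a smoothed potential for $K$) discards, leaving only $(1+o(1))\,n$. I expect this reformulation to be the one genuine step: once $\Phi$ is written down the conclusion is immediate, whereas viewed head-on the inequality $\operatorname{Var}_{p_\theta}(\langle\theta,X\rangle)\le n$ looks like a delicate estimate on a tilted log-concave measure with boundary.

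Finally, the self-concordance (1) I would treat as in~\cite{bubeckeldan2019entropic} and essentially orthogonal to the above. Differentiating the identity $\nabla^2 f^\star(\nabla f(\theta))\,\nabla^2 f(\theta)=I$ once more (in the direction $h$) yields, with $x=\nabla f(\theta)$ and $u:=\nabla^2 f^\star(x)\,h$,
\begin{align*}
\nabla^3 f^\star(x)[h,h,h]=-\,\nabla^3 f(\theta)[u,u,u]=-\,\mathbb{E}_{p_\theta}\bigl[\bigl(\langle u,X\rangle-\mathbb{E}_{p_\theta}\langle u,X\rangle\bigr)^3\bigr].
\end{align*}
Since the uniform measure on $K$ is log-concave and exponential tilting preserves log-concavity, $Y:=\langle u,X\rangle$ is a one-dimensional log-concave random variable, and for such $Y$ one has the sharp estimate $\lvert\mathbb{E}(Y-\mathbb{E}Y)^3\rvert\le 2\,(\operatorname{Var} Y)^{3/2}$ (equality for the exponential distribution). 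As $\operatorname{Var} Y=\langle u,\nabla^2 f(\theta)\,u\rangle=\langle h,\nabla^2 f^\star(x)\,h\rangle\ge 0$, this is exactly the stated self-concordance inequality.
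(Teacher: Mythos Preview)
Your argument is correct, and in fact you prove more than the stated theorem: your bound for part~(2) gives the sharp value $\nu=n$, which is the paper's main result rather than the cited $(1+o(1))\,n$ of Bubeck--Eldan. The paper does not re-prove this theorem itself --- it is quoted from Bubeck--Eldan --- so the relevant comparison is with the paper's two proofs that $\nu=n$.

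Your route to $\operatorname{Var}_{p_\theta}\langle\theta,X\rangle\le n$ is genuinely different from either of these. The paper first recognizes this inequality as the special case $V=-\langle\theta,\cdot\rangle$ (on $K$) of the general bound $\operatorname{var}_\mu V\le n$ for log-concave $\mu\propto e^{-V}$, and then proves the latter via the dimensional Brascamp--Lieb inequality, established through H\"ormander's $L^2$ method or an optimal-transport linearization; alternatively it bootstraps the $(1+o(1))\,n$ bound through a tensorization trick on products $K^k$. Your dilation $\Phi(s)=\ln\int_{sK}e^{\langle\theta,x\rangle}\,\D x = n\ln s + f(s\theta)$ together with Pr\'ekopa's theorem applied to $e^{\langle\theta,x\rangle}\one_C(x,s)$ on the cone $C=\{(x,s):x\in sK,\ s>0\}$ is shorter and more elementary: it exploits the specific structure of the entropic barrier (a linear tilt on a convex body) and sidesteps the functional-inequality machinery entirely, the $n$ appearing transparently as the Jacobian exponent. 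What the paper's approach buys is generality --- the variance bound $\operatorname{var}_\mu V\le n$ holds for arbitrary log-concave potentials, not just linear ones restricted to a bounded domain --- and a direct link to the Brascamp--Lieb circle of ideas, whereas your argument is tailored exactly to the case at hand. For part~(1) you follow the original Bubeck--Eldan argument via the third-moment inequality $|\E(Y-\E Y)^3|\le 2\,(\operatorname{Var} Y)^{3/2}$ for one-dimensional log-concave $Y$; the present paper does not revisit this part.
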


Self-concordant barriers are most well-known for their prominent role in the theory of interior-point methods for optimization~\cite{nesterov1995interiorpoint}, but they also find applications to numerous other problems such as online linear optimization with bandit feedback~\cite{abernethy2008banditlinear} (indeed, the latter was a motivating example for the introduction of the entropic barrier in~\cite{bubeckeldan2019entropic}).

A central theoretical question in the study of self-concordant barriers is: for any convex domain $K\subseteq \R^n$, does there exist a $\nu$-self-concordant barrier for $K$, and if so, what the optimal value of the parameter $\nu$?
In their seminal work~\cite{nesterov1995interiorpoint}, Y.\ Nesterov and A.\ Nemirovskii constructed for each $K$ a \emph{universal barrier} with $\nu = O(n)$.
On the other hand, explicit examples (e.g.\ the simplex and the cube) show that the best possible self-concordance parameter is $\nu = n$~\cite[Proposition 2.3.6]{nesterov1995interiorpoint}.
The situation was better understood for convex cones, on which the \emph{canonical barrier} was shown to be $n$-self-concordant independently by R.\ Hildebrand and D.\ Fox~\cite{hildebrand2014canonicalbarrier, fox2015canonical}.
Then, in~\cite{bubeckeldan2019entropic}, S.\ Bubeck and R.\ Eldan introduced the entropic barrier and showed that it is $(1 + o(1)) \, n$-self-concordant on general convex bodies, and $n$-self-concordant on convex cones; further, they showed that the universal barrier is also $n$-self-concordant on convex cones.
Subsequently, Y.\ Lee and M.\ Yue settled the question of obtaining optimal self-concordant barriers for general convex bodies by proving that the universal barrier is always $n$-self-concordant~\cite{leeyue2021universalbarrier}.

The purpose of this note is to describe the following observation.

\begin{thm}\label{thm:main}
    The entropic barrier on any convex body $K \subseteq \R^n$ is an $n$-self-concordant barrier.
\end{thm}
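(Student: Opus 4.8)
The plan is to recast the barrier inequality as a variance bound for the exponential family attached to $K$, and then to recognize that bound as a sharp varentropy estimate supplied by the dimensional Brascamp--Lieb inequality. Throughout, for $\theta \in \R^n$ write $\mu_\theta$ for the probability measure on $K$ with density proportional to $x \mapsto \exp\langle\theta, x\rangle$; this is a log-concave probability measure whose potential $x \mapsto -\langle\theta,x\rangle + f(\theta)$ is \emph{affine} on $K$, a feature that will be decisive.

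First I would translate everything into the language of $\{\mu_\theta\}$ using the standard calculus of the log-Laplace transform: $\nabla f(\theta) = \mathbb{E}_{\mu_\theta}[X]$ and $\nabla^2 f(\theta) = \operatorname{Cov}_{\mu_\theta}(X) \succ 0$, the map $\theta \mapsto \nabla f(\theta)$ is a bijection of $\R^n$ onto $\interior K$ with inverse $\nabla f^\star$, and $\nabla^2 f^\star(\nabla f(\theta)) = \nabla^2 f(\theta)^{-1}$. Writing $x = \nabla f(\theta)$, so $\nabla f^\star(x) = \theta$, the inequality $\nabla^2 f^\star(x) \succeq \frac{1}{n}\,\nabla f^\star(x)\,{\nabla f^\star(x)}^\T$ becomes $\operatorname{Cov}_{\mu_\theta}(X)^{-1} \succeq \frac{1}{n}\,\theta\theta^\T$. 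Since a positive definite matrix $A$ satisfies $A \succeq \frac{1}{\nu}\, bb^\T$ if and only if $b^\T A^{-1} b \le \nu$, and since $\theta$ ranges over all of $\R^n$ as $x$ ranges over $\interior K$, Theorem~\ref{thm:main} is equivalent to the estimate
$$\operatorname{Var}_{\mu_\theta}\!\bigl(\langle\theta, X\rangle\bigr) \;=\; \theta^\T \operatorname{Cov}_{\mu_\theta}(X)\,\theta \;\le\; n \qquad\text{for all } \theta \in \R^n,$$
the other requirements of an $n$-self-concordant barrier being already part of Theorem~\ref{thm:entropic_barrier}.

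Next I would observe that, since the density of $\mu_\theta$ is $p_\theta(x) = \exp(\langle\theta,x\rangle - f(\theta))$, the linear function $\langle\theta,\cdot\rangle$ differs from the information content $-\log p_\theta$ only by the constant $-f(\theta)$; hence $\operatorname{Var}_{\mu_\theta}(\langle\theta,X\rangle) = \operatorname{Var}_{\mu_\theta}(-\log p_\theta(X))$ is precisely the varentropy of $\mu_\theta$. It therefore suffices to invoke the sharp inequality that the varentropy of any log-concave probability measure on $\R^n$ is at most $n$ --- equivalently, to apply the dimensional Brascamp--Lieb inequality to $\mu_\theta$ with the affine test function $\langle\theta,\cdot\rangle$, i.e.\ to $g = -\log p_\theta - \mathbb{E}_{\mu_\theta}[-\log p_\theta]$.

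The crux --- and the only step using more than routine convex analysis --- is exactly this last input. Because the potential of $\mu_\theta$ has vanishing Hessian on $\interior K$, the classical Brascamp--Lieb inequality is entirely vacuous here, and the point is that the dimensional refinement nevertheless yields a finite bound, namely $n$, recovered from the ambient dimension. For a self-contained alternative one may slice: the push-forward of $\mu_\theta$ under $x \mapsto \langle\theta,x\rangle$ has density proportional to $y \mapsto e^y\,\rho(y)^{n-1}$ on an interval, with $\rho$ concave by the Brunn--Minkowski inequality applied to the parallel $(n-1)$-dimensional sections of $K$, and one is reduced to a one-dimensional variance bound whose extremal case is a reflected $\mathrm{Gamma}(n,1)$ law of variance exactly $n$ --- matching the cone case of~\cite{bubeckeldan2019entropic} and showing that the constant $n$ is sharp.
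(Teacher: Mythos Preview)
Your main argument is correct and coincides with the paper's first proof: reduce the barrier condition to $\var_{p_\theta}\langle\theta,X\rangle\le n$ via conjugate duality, identify this as the varentropy of the log-concave measure $p_\theta$, and invoke Theorem~\ref{thm:varentropy}. The paper also gives a second, independent proof via a tensorization trick (Lemma~\ref{lem:tensorization}), while your final paragraph sketches yet another route (marginalizing to one dimension via Brunn--Minkowski and reducing to a one-dimensional variance bound with extremal Gamma$(n,1)$), which does not appear in the paper but is in the spirit of known proofs of Theorem~\ref{thm:varentropy}; note only that your phrasing ``apply the dimensional Brascamp--Lieb inequality to $\mu_\theta$'' is slightly imprecise since Theorem~\ref{thm:brascamp_lieb_dim} as stated assumes $\nabla^2 V\succ 0$, so one must pass through Theorem~\ref{thm:varentropy} (obtained from it by approximation) rather than invoke it directly.
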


Besides improving the result of~\cite{bubeckeldan2019entropic}, the theorem shows that the entropic barrier provides a second example of an optimal self-concordant barrier for general convex bodies; to the best of the author's knowledge, no other optimal self-concordant barriers are known.

We will provide two distinct proofs of~\autoref{thm:main}.
First, we will observe that~\autoref{thm:main} is an immediate consequence of the following theorem, which was obtained independently in~\cite{nguyen2014dimensionalvariance, wang2014heatcapacity}; see also~\cite{fradelizimadimanwang2016infocontent}.

\begin{thm}\label{thm:varentropy}
    Let $\mu \propto \exp(-V)$ be a log-concave density on $\R^n$.
    Then,
    \begin{align*}
        \var_\mu V
        &\le n\,.
    \end{align*}
\end{thm}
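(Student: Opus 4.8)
The plan is to prove the one-parameter family of bounds $\var_{\mu_\beta} V \le n/\beta^2$ for every $\beta > 0$, where $\mu_\beta \propto \exp(-\beta V)$; the assertion of the theorem is the special case $\beta = 1$. The point of introducing $\beta$ is that $\var_{\mu_\beta} V$ is a second derivative of a log-partition function. Writing $Z(\beta) := \int_{\R^n} \exp(-\beta V) \, \D x$ and differentiating twice under the integral sign, one finds $\frac{\D}{\D \beta} \ln Z(\beta) = -\mathbb{E}_{\mu_\beta} V$ and $\frac{\D^2}{\D \beta^2} \ln Z(\beta) = \var_{\mu_\beta} V$. Hence the bound $\var_{\mu_\beta} V \le n/\beta^2$ is \emph{equivalent} to the concavity in $\beta$ of $\beta \mapsto \ln Z(\beta) + n \ln \beta = \ln\bigl(\beta^n Z(\beta)\bigr)$, whose second derivative is precisely $\var_{\mu_\beta} V - n/\beta^2$.

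The crux is to recognize $\beta^n Z(\beta)$ as a one-dimensional marginal of a log-concave function on $\R^{n+1}$. The change of variables $x = y/\beta$ gives
\begin{align*}
    \beta^n Z(\beta) \;=\; \int_{\R^n} \exp\bigl(-\beta\, V(y/\beta)\bigr) \, \D y\,,
\end{align*}
and $(\beta, y) \mapsto \beta\, V(y/\beta)$ on $(0, \infty) \times \R^n$ is the \emph{perspective function} of $V$, which is jointly convex because $V$ is convex. Therefore $(\beta, y) \mapsto \exp(-\beta\, V(y/\beta))$ is log-concave, and by Pr\'ekopa's theorem its partial integral $\beta \mapsto \beta^n Z(\beta)$ is log-concave in $\beta$ --- exactly the concavity we needed. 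Specializing to $\beta = 1$ yields $\var_\mu V \le n$.

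What remains is analytic bookkeeping, and I expect this, rather than the convexity at the heart of the argument, to be the part demanding the most care (though none of it is deep). One must verify $Z(\beta) < \infty$ for all $\beta > 0$ and not merely at $\beta = 1$; this follows from the standard fact that a log-concave probability density decays at least exponentially at infinity, which forces $\exp(-V)$, and hence $\exp(-\beta V)$, to be integrable for every $\beta > 0$. The same tail estimate yields $\mathbb{E}_\mu V^2 < \infty$, so that $\var_\mu V$ is finite, and legitimizes differentiation under the integral sign. Finally, Pr\'ekopa's theorem is most conveniently invoked when the perspective function is finite and smooth, so in general I would first prove the bound for smooth $V$ with $\nabla^2 V \succ 0$ and then remove the regularity by a routine approximation --- for instance, convolving $\mu$ with a small Gaussian (which preserves log-concavity) and adding $\frac{\varepsilon}{2}\, \abs{\,\cdot\,}^2$ to $V$ --- while checking that the variances converge.

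I also note a second, infinitesimal route to the same conclusion, which is the one alluded to in the title of this note: one may instead invoke the \emph{dimensional} Brascamp--Lieb inequality of Nguyen --- a local counterpart of the Pr\'ekopa argument above --- which yields the bound $\var_\mu V \le n$. I would regard the Pr\'ekopa proof as the cleaner of the two, but either establishes the claim.
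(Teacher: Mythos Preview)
Your proof is correct and takes a genuinely different route from the paper's. The paper deduces the bound $\var_\mu V \le n$ from the dimensional Brascamp--Lieb inequality (its Theorem~4): setting $g=V$ there and rearranging gives
\[
\var_\mu V \;\le\; \frac{n\,\E_\mu\langle \nabla V,(\nabla^2 V)^{-1}\nabla V\rangle}{\,n+\E_\mu\langle \nabla V,(\nabla^2 V)^{-1}\nabla V\rangle\,}\;\le\; n\,,
\]
after which Section~3 is devoted to proving that dimensional Brascamp--Lieb inequality via H\"ormander's $L^2$ method (and a second optimal-transport sketch). Your argument bypasses all of this: the identity $\beta^n Z(\beta)=\int \exp(-\beta\,V(y/\beta))\,\D y$ together with joint convexity of the perspective $(\beta,y)\mapsto \beta\,V(y/\beta)$ and Pr\'ekopa's marginal theorem gives log-concavity of $\beta\mapsto \beta^n Z(\beta)$ in one stroke, and the variance bound drops out by differentiating twice. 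This is essentially the original argument of Nguyen and of Fradelizi--Madiman--Wang (both cited in the paper as sources for the theorem), and it is the more economical of the two --- Pr\'ekopa is classical and needs no regularity on $V$, so your smoothing step is in fact dispensable here; all you really need is enough integrability to differentiate $Z(\beta)$ twice, which the exponential tails supply. The paper's route, by contrast, yields the stronger dimensional Brascamp--Lieb inequality as a byproduct, which is of independent interest for the note's broader narrative. You correctly identify this second route at the end of your proposal; it is precisely the one the paper takes.
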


In turn, as discussed in~\cite{nguyen2014dimensionalvariance, bolleygentilguillin2018brascamplieb},~\autoref{thm:varentropy} is related to certain dimensional improvements of the \emph{Brascamp-Lieb inequality}.
We state a version of this inequality which is convenient for the present discussion.

\begin{thm}[{\cite[Proposition 4.1]{bolleygentilguillin2018brascamplieb}}]\label{thm:brascamp_lieb_dim}
    Let $\mu\propto \exp(-V)$ be a log-concave density on $\R^n$, where $V$ is of class $\mc C^2$ and $\nabla^2 V \succ 0$.
    Then, for all $\mc C^1$ compactly supported $g : \R^n\to\R$, it holds that
    \begin{align*}
        \var_\mu g
        &\le \E_\mu\langle \nabla g, (\nabla^2 V){}^{-1} \, \nabla g \rangle - \frac{{\cov_\mu(g, V)}^2}{n - \var_\mu V}\,.
    \end{align*}
\end{thm}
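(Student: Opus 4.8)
The plan is to run the classical Hörmander--Brascamp--Lieb $L^2$-argument through the Langevin generator of $\mu$, and then to extract the dimensional correction from a single Cauchy--Schwarz inequality against a test object built out of the Poisson equation for the centered log-density. After subtracting a constant we may assume $\E_\mu g = 0$, and by a standard mollification and truncation of $V$ (making $\nabla^2 V$ bounded above and below) together with a localization of $g$, it suffices to work where every function is smooth with enough decay that the integrations by parts below are legitimate; the general case follows by approximation. Let $\mathcal L := \Delta - \langle\nabla V,\nabla\,\cdot\,\rangle$; it is self-adjoint and nonpositive on $L^2(\mu)$ and satisfies $\E_\mu[\phi\,\mathcal L\psi] = -\E_\mu\langle\nabla\phi,\nabla\psi\rangle$. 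Let $u$ solve $-\mathcal L u = g$ and $u_V$ solve $-\mathcal L u_V = V - \E_\mu V$. Writing $\operatorname{div}_\mu\Psi := \operatorname{div}\Psi - \langle\nabla V,\Psi\rangle$ for the $\mu$-divergence (the formal adjoint of $-\nabla$ on $L^2(\mu)$, so that also $\E_\mu\langle\nabla^2\phi,M\rangle_{\mathrm{HS}} = -\E_\mu\langle\nabla\phi,\operatorname{div}_\mu M\rangle$ for symmetric-matrix fields $M$), one has the elementary identities $\mathcal L\phi = \operatorname{div}_\mu\nabla\phi$, $\operatorname{div}_\mu(\nabla^2\phi) = \mathcal L\nabla\phi$, $\operatorname{div}_\mu(\operatorname{Id}) = -\nabla V$, and the commutation relation $\nabla(\mathcal L\phi) = \mathcal L(\nabla\phi) - (\nabla^2 V)\nabla\phi$; applied to $u$ and using $\mathcal L u = -g$, the latter reads $\nabla g = (\nabla^2 V)\nabla u - \mathcal L\nabla u$.

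Pairing $\nabla g = (\nabla^2 V)\nabla u - \mathcal L\nabla u$ with $\nabla u$ and integrating recovers the usual Brascamp--Lieb identity $\var_\mu g = \E_\mu\|\nabla^2 u\|_{\mathrm{HS}}^2 + \E_\mu\langle\nabla u,(\nabla^2 V)\nabla u\rangle$. Pairing instead with $(\nabla^2 V)^{-1}\nabla g$, expanding the quadratic form, and using the previous identity to remove the term $\E_\mu\langle\nabla u,(\nabla^2 V)\nabla u\rangle$, one obtains the exact identity
\[
    \var_\mu g = \E_\mu\langle\nabla g,(\nabla^2 V)^{-1}\nabla g\rangle - \E_\mu\|\nabla^2 u\|_{\mathrm{HS}}^2 - \E_\mu\langle\mathcal L\nabla u,(\nabla^2 V)^{-1}\mathcal L\nabla u\rangle .
\]
As the last term is nonnegative, it remains to prove the lower bound
\[
    \E_\mu\|\nabla^2 u\|_{\mathrm{HS}}^2 + \E_\mu\langle\mathcal L\nabla u,(\nabla^2 V)^{-1}\mathcal L\nabla u\rangle \;\ge\; \frac{(\cov_\mu(g,V))^2}{n - \var_\mu V},
\]
for which I also record, via integration by parts, $\cov_\mu(g,V) = \E_\mu[g\,(V-\E_\mu V)] = \E_\mu\langle\nabla u,\nabla V\rangle = \E_\mu[\operatorname{tr}\nabla^2 u]$ and, likewise, $\E_\mu[\operatorname{tr}\nabla^2 u_V] = \var_\mu V$.

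The heart of the argument — and the step I expect to be the main obstacle — is to obtain this bound from a single Cauchy--Schwarz inequality with the exact constant $n - \var_\mu V$; the naive choice (pairing $\nabla^2 u$ against $\operatorname{Id}$) only produces the weaker constant $n$. The correct test object is the pair $P := \operatorname{Id} - \nabla^2 u_V$ (a symmetric-matrix field) and $W := -\nabla u_V$. Using $\operatorname{div}_\mu(\operatorname{Id}) = -\nabla V$ and $\operatorname{div}_\mu(\nabla^2 u_V) = \mathcal L\nabla u_V$ one checks instantly that $\mathcal L W - \operatorname{div}_\mu P = \nabla V$, so integration by parts gives
\[
    \E_\mu\!\left[\langle\nabla^2 u,P\rangle_{\mathrm{HS}} + \langle\mathcal L\nabla u,W\rangle\right] = \E_\mu\langle\nabla u,\,\mathcal L W - \operatorname{div}_\mu P\rangle = \E_\mu\langle\nabla u,\nabla V\rangle = \cov_\mu(g,V) .
\]
Now apply the weighted Cauchy--Schwarz inequality
\[
    \E_\mu\!\left[\langle A_1,B_1\rangle_{\mathrm{HS}} + \langle A_2,B_2\rangle\right]^2 \le \E_\mu\!\left[\|A_1\|_{\mathrm{HS}}^2 + \langle A_2,(\nabla^2 V)^{-1}A_2\rangle\right]\cdot\E_\mu\!\left[\|B_1\|_{\mathrm{HS}}^2 + \langle B_2,(\nabla^2 V)B_2\rangle\right]
\]
with $(A_1,A_2) = (\nabla^2 u,\mathcal L\nabla u)$ and $(B_1,B_2) = (P,W)$. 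The first factor on the right is exactly the quantity to be bounded below; and expanding the squares in the second factor, using $\E_\mu[\operatorname{tr}\nabla^2 u_V] = \var_\mu V$ together with the Brascamp--Lieb identity applied to $V - \E_\mu V$, namely $\E_\mu\|\nabla^2 u_V\|_{\mathrm{HS}}^2 + \E_\mu\langle\nabla u_V,(\nabla^2 V)\nabla u_V\rangle = \var_\mu V$, all the $u_V$-terms cancel and the second factor equals precisely $n - \var_\mu V$. Dividing (legitimate since $\var_\mu V < n$ under the hypothesis $\nabla^2 V\succ 0$, the borderline cases being absorbed into the approximation step) and combining with the exact identity above finishes the proof.

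The one genuinely creative point is the guess for $P$ and $W$; its form is forced by the Gaussian equality case — $g$ proportional to the log-density, where the Poisson solutions are quadratics — and once it is written down, the rest is a string of integrations by parts. As a byproduct, positivity of the second Cauchy--Schwarz factor reproves $\var_\mu V \le n$, i.e.\ \autoref{thm:varentropy}.
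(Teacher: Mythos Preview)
Your argument is correct. Both your proof and the paper's proof sit inside the H\"ormander $L^2$ framework for the Langevin generator and ultimately rely on the Bochner commutation $\nabla\mathcal L = (\mathcal L - \nabla^2 V)\nabla$, but the way the dimensional term is extracted is genuinely different. The paper does \emph{not} solve the Poisson equation for $g$; instead it leaves $u$ free, writes
\[
\var_\mu g \le \E_\mu[(g-\ms Lu)^2] + \E_\mu\langle\nabla g,(\nabla^2 V)^{-1}\nabla g\rangle - \E_\mu\|\nabla^2 u\|_{\mathrm{HS}}^2,
\]
uses only the crude pointwise bound $\|\nabla^2 u\|_{\mathrm{HS}}^2 \ge (\Delta u)^2/n$, sets $\ms Lu = g + a(V-\E_\mu V)$, and optimizes over the scalar $a$. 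Your route instead fixes $u$ as the exact Poisson solution, derives the sharp deficit identity
\[
\E_\mu\langle\nabla g,(\nabla^2 V)^{-1}\nabla g\rangle - \var_\mu g = \E_\mu\|\nabla^2 u\|_{\mathrm{HS}}^2 + \E_\mu\langle\mathcal L\nabla u,(\nabla^2 V)^{-1}\mathcal L\nabla u\rangle,
\]
and then lower-bounds the full right-hand side by a single weighted Cauchy--Schwarz against the test pair $(P,W)=(\operatorname{Id}-\nabla^2 u_V,\,-\nabla u_V)$, whose second factor collapses exactly to $n-\var_\mu V$. The two are dual in spirit (optimizing over $a$ is the variational form of your Cauchy--Schwarz, and the paper's choice $\ms Lu = g + a(V-\E_\mu V)$ is precisely the combination $u = u_g + a\,u_V$), but what each buys is different: the paper's version needs less machinery (no second Poisson equation, only the scalar trace inequality), while yours yields an exact deficit formula, retains the extra nonnegative term $\E_\mu\langle\mathcal L\nabla u,(\nabla^2 V)^{-1}\mathcal L\nabla u\rangle$, and produces $\var_\mu V\le n$ as an immediate byproduct of the positivity of the second Cauchy--Schwarz factor rather than as a separate observation. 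One small remark: your sentence ``As the last term is nonnegative, it remains to prove\ldots'' is slightly misleading, since you do not drop that term but keep it in the Cauchy--Schwarz; the nonnegativity is only used implicitly to recover the classical Brascamp--Lieb inequality as a corollary.
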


It is straightforward to see that~\autoref{thm:brascamp_lieb_dim} implies~\autoref{thm:varentropy}.
Indeed, via a routine approximation argument, we may assume that $\mu$ satisfies the hypothesis of~\autoref{thm:brascamp_lieb_dim}.
Taking $g = V$ (which is justified via another approximation argument) and rearranging the inequality of~\autoref{thm:brascamp_lieb_dim} yields
\begin{align*}
    \var_\mu V
    &\le \frac{n \E_\mu \langle \nabla V, (\nabla^2 V){}^{-1} \, \nabla V \rangle}{n + \E_\mu \langle \nabla V, (\nabla^2 V){}^{-1} \, \nabla V \rangle}
    \le n\,.
\end{align*}

Next, in our second approach to~\autoref{thm:main}, we observe that a key step in the proof of~\autoref{thm:varentropy} given by~\cite{wang2014heatcapacity} is a tensorization principle.
It is then natural to wonder whether such a principle can be applied directly to deduce~\autoref{thm:main}.
Indeed, we have the following elementary lemma.

\begin{lem}\label{lem:tensorization}
    Suppose that for each $n\in\N^+$ and each convex body $K \subseteq \R^n$, we have a function $\phi_{n,K} : \interior K \to \R$ such that $\phi_{n,K}$ is a $\nu(n)$-self-concordant barrier for $K$.
    Also, suppose that the following consistency condition holds:
    \begin{align}\label{eq:consistency}
        \phi_{m+n, K\times K'}(x, x')
        &= \phi_{m,K}(x) + \phi_{n,K'}(x')\,,
    \end{align}
    for all $m,n\in\N^+$, all convex bodies $K \subseteq \R^m$, $K' \subseteq \R^n$, and all $x \in K$, $x' \in K'$.
    Then, $\phi_{n,K}$ is a $\inf_{k\in\N^+} \nu(kn)/k$-self-concordant barrier for $K$.
\end{lem}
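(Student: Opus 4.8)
The plan is to exploit the consistency condition to relate the self-concordance parameter of $\phi_{n,K}$ to that of $\phi_{kn, K^k}$ on the product body $K^k := K \times \cdots \times K$ ($k$ copies). Fix $n$ and $K$, and for brevity write $\phi := \phi_{n,K}$ and $\Phi_k := \phi_{kn, K^k}$. Applying~\eqref{eq:consistency} inductively, we get $\Phi_k(x_1,\dots,x_k) = \sum_{i=1}^k \phi(x_i)$ for all $x_1,\dots,x_k \in \interior K$. The key point is to understand how the barrier inequality behaves under this additive structure and the diagonal embedding.

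**Key steps.**

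First I would recall that the $\nu$-self-concordant barrier inequality $\nabla^2 \psi(y) \succeq \frac1\nu \, \nabla\psi(y)\,\nabla\psi(y)^\T$ is equivalent to the scalar inequality $\langle h, \nabla^2\psi(y)\, h\rangle \ge \frac1\nu \, \langle \nabla\psi(y), h\rangle^2$ for all directions $h$. For $\Phi_k$ at a point $\mathbf{x} = (x_1,\dots,x_k)$ with direction $\mathbf{h} = (h_1,\dots,h_k)$, both sides split: $\langle \mathbf h, \nabla^2\Phi_k(\mathbf x)\,\mathbf h\rangle = \sum_i \langle h_i, \nabla^2\phi(x_i)\,h_i\rangle$, while $\langle \nabla\Phi_k(\mathbf x), \mathbf h\rangle = \sum_i \langle\nabla\phi(x_i), h_i\rangle$. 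Since $\Phi_k$ is a $\nu(kn)$-self-concordant barrier, we have
\[
    \sum_{i=1}^k \langle h_i, \nabla^2\phi(x_i)\, h_i\rangle \;\ge\; \frac{1}{\nu(kn)}\,\Bigl(\sum_{i=1}^k \langle\nabla\phi(x_i), h_i\rangle\Bigr)^{\!2}\,.
\]
Next I would specialize to the diagonal: take $x_1 = \cdots = x_k = x$ for a fixed $x \in \interior K$, and $h_1 = \cdots = h_k = h$ for a fixed $h \in \R^n$. Then the left side becomes $k\,\langle h, \nabla^2\phi(x)\,h\rangle$ and the right side becomes $\frac{k^2}{\nu(kn)}\,\langle\nabla\phi(x), h\rangle^2$. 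Dividing by $k$ yields
\[
    \langle h, \nabla^2\phi(x)\, h\rangle \;\ge\; \frac{k}{\nu(kn)}\,\langle\nabla\phi(x), h\rangle^2\,,
\]
which says precisely that $\phi$ is a $\nu(kn)/k$-self-concordant barrier for $K$. (The self-concordance property from~\autoref{thm:entropic_barrier}(1), being a condition on $\phi$ alone, is unchanged; one should note in passing that $\phi$ remains a genuine barrier — convex with the right blow-up at the boundary — which is inherited since $\Phi_k$ is.) Since this holds for every $k \in \N^+$, we may take the infimum over $k$, giving that $\phi_{n,K}$ is a $\bigl(\inf_{k\in\N^+}\nu(kn)/k\bigr)$-self-concordant barrier for $K$.

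**Main obstacle.** This argument is essentially a one-line observation once the bookkeeping is set up, so there is no substantial obstacle; the only thing requiring a modicum of care is confirming that the product body $K^k \subseteq \R^{kn}$ is again a convex body (bounded, closed, full-dimensional interior), so that the hypothesis applies to $\Phi_k$, and that the diagonal point $(x,\dots,x)$ lies in $\interior(K^k) = (\interior K)^k$ — both of which are immediate. One might also remark that taking the infimum is what lets us extract, e.g., the $\limsup_{k\to\infty}\nu(kn)/k$ behavior, which is how the $(1+o(1))\,n$ bound of~\cite{bubeckeldan2019entropic} gets upgraded to the clean bound $n$ when $\nu(m) = (1+o(1))\,m$.
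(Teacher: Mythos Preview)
Your proposal is correct and follows essentially the same approach as the paper: apply the consistency condition to write $\Phi_k$ as a sum, plug the diagonal point and direction $(x,\dots,x)$, $(h,\dots,h)$ into the $\nu(kn)$-barrier inequality for $\Phi_k$, and divide by $k$. The extra remarks you include (that $K^k$ is again a convex body and that the diagonal lies in its interior) are left implicit in the paper but are indeed immediate.
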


We will check that the entropic barrier satisfies the consistency condition described in the previous lemma in Section~\ref{scn:tensorization}.
Combined with the second statement in~\autoref{thm:entropic_barrier}, it yields another proof of~\autoref{thm:main}.

The remainder of this note is organized as follows.
In Section~\ref{scn:from_barrier_to_bl}, we will explain the connection between~\autoref{thm:main} and~\autoref{thm:varentropy}, thereby deducing the former from the latter.
Then, so as to make this note more self-contained, in Section~\ref{scn:dim_bl} we will provide two proofs of the dimensional Brascamp-Lieb inequality (\autoref{thm:brascamp_lieb_dim}).
The first proof follows~\cite{bolleygentilguillin2018brascamplieb} and proceeds via a dimensional improvement of H\"ormander's $L^2$ method.
The second ``proof'', which is only sketched, shows how the dimensional Brascamp-Lieb inequality may be obtained from a convexity principle: the entropy functional is convex along generalized Wasserstein geodesics which arise from Bregman divergence couplings~\cite{ahnchewi2021mirrorlangevin}.
The second argument appears to be new.
Finally, in Section~\ref{scn:tensorization}, we present the tensorization argument as encapsulated in~\autoref{lem:tensorization}.

\section{From the entropic barrier to the dimensional Brascamp-Lieb inequality}\label{scn:from_barrier_to_bl}

In this section, we follow~\cite{bubeckeldan2019entropic}.
The entropic barrier has a fruitful interpretation in terms of an exponential family of probability distributions defined over the convex body $K \subseteq \R^n$.
For each $\theta\in\R^n$, we define the density $p_\theta$ on $K$ via
\begin{align}\label{eq:p_theta}
    p_\theta(x)
    &:= \frac{\exp{\langle \theta, x \rangle}}{\int_K \exp{\langle \theta, x' \rangle} \, \D x'} \, \one\{x\in K\}\,.
\end{align}
Since $f$ (defined in~\eqref{eq:f}) is essentially the logarithmic moment-generating function of $p_\theta$, then the derivatives of $f$ yield cumulants of $p_\theta$.
In particular,
\begin{align*}
    \nabla f(\theta)
    &= \E_{p_\theta} X\,, \qquad \nabla^2 f(\theta) = \cov_{p_\theta} X\,.
\end{align*}
By convex duality, the mappings $\nabla f : \R^n\to \interior K$ and $\nabla f^\star : \interior K \to\R^n$ are inverses of each other.
From the classical duality between the logarithmic moment-generating function and entropy, we can also deduce that
\begin{align*}
    f^\star(x)
    &= \eu H(p_{\nabla f^\star(x)})\,,
\end{align*}
where $\eu H$ denotes the entropy functional\footnote{Note the sign convention, which is opposite the usual one in information theory. We use this convention as it is convenient for $\eu H$ to be convex.}
\begin{align}\label{eq:entropy_functional}
    \eu H(p)
    &:= \int p\ln p\,.
\end{align}

The self-concordance parameter of $f^\star$ is the least $\nu \ge 0$ such that
\begin{align*}
    \langle \nabla f^\star(x), [\nabla^2 f^\star(x)]{}^{-1} \, \nabla f^\star(x) \rangle \le \nu\,, \qquad \text{for all}~x \in \interior K\,.
\end{align*}
Taking $x = \nabla f(\theta)$, equivalently we require
\begin{align*}
    \langle \theta, \nabla^2 f(\theta) \, \theta \rangle \le \nu\,, \qquad \text{for all}~\theta \in \R^n\,,
\end{align*}
which has the probabilistic interpretation
\begin{align}\label{eq:prob_interp}
    \var_{p_\theta}{\langle \theta, X \rangle} \le \nu\,, \qquad\text{for all}~\theta \in \R^n\,.
\end{align}

From the definition~\eqref{eq:p_theta}, we see that the density $p_\theta \propto \exp(-V)$ is log-concave, where $V(x) = \langle \theta, x \rangle$ for $x\in \interior K$.
By applying~\autoref{thm:varentropy} to $p_\theta$, we immediately deduce that~\eqref{eq:prob_interp} holds with $\nu = n$.

\section{Proof of the dimensional Brascamp-Lieb inequality}\label{scn:dim_bl}

Next, we wish to give some proofs of the dimensional Brascamp-Lieb inequality (\autoref{thm:brascamp_lieb_dim}).
Classically, the Brascamp-Lieb inequality reads as follows.

\begin{thm}[{\cite{brascamplieb1976}}]\label{thm:brascamp_lieb}
    Let $\mu \propto \exp(-V)$ be a density on $\R^n$, where $V$ is a convex function of class $\mc C^2$.
    Then, for every locally Lipschitz $g : \R^n\to\R$,
    \begin{align}\label{eq:brascamp_lieb}
        \var_\mu g
        &\le \E_\mu \langle \nabla g, (\nabla^2 V){}^{-1} \, \nabla g \rangle\,.
    \end{align}
\end{thm}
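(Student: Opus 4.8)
The plan is to prove~\eqref{eq:brascamp_lieb} by Hörmander's $L^2$ method (equivalently, the integrated Bochner / Helffer--Sjöstrand argument), which is the non-dimensional precursor of the proof we give for~\autoref{thm:brascamp_lieb_dim} in the next subsection. By a routine approximation, I would first reduce to the case where $V$ is smooth with $\nabla^2 V \succ 0$ on all of $\R^n$ (replace $V$ by $V + \varepsilon \lvert \cdot \rvert^2/2$ and let $\varepsilon \downarrow 0$, using monotone convergence on the right-hand side), where $g$ is smooth and compactly supported, and where $\E_\mu g = 0$. Write $\mc L := \Delta - \langle \nabla V, \nabla \cdot \rangle$ for the generator of the associated Langevin diffusion; it is symmetric in $L^2(\mu)$ and satisfies the integration-by-parts identity $\E_\mu[\varphi\,(-\mc L)\psi] = \E_\mu \langle \nabla \varphi, \nabla \psi \rangle$, together with $\E_\mu \mc L \varphi = 0$, for sufficiently regular and integrable $\varphi, \psi$.

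Next I would solve the Poisson equation $-\mc L u = g$; under the standing assumptions $u$ is smooth and, along with its derivatives, decays fast enough to justify the computations below. Integration by parts followed by the Cauchy--Schwarz inequality for the pointwise inner product induced by $\nabla^2 V \succ 0$ gives
\begin{align*}
    \var_\mu g
    &= \E_\mu[g\,(-\mc L) u]
    = \E_\mu \langle \nabla g, \nabla u \rangle \\
    &\le {\bigl( \E_\mu \langle \nabla g, (\nabla^2 V){}^{-1}\,\nabla g \rangle \bigr)}^{1/2}\,{\bigl( \E_\mu \langle \nabla u, (\nabla^2 V)\,\nabla u \rangle \bigr)}^{1/2}\,,
\end{align*}
so it remains to bound the second factor by $\var_\mu g$.

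For this I would use the integrated Bochner identity. Integrating the pointwise formula
\begin{align*}
    \tfrac12\,\mc L \lvert \nabla u \rvert^2
    = \lVert \nabla^2 u \rVert_{\mathrm{HS}}^2 + \langle \nabla u, \nabla \mc L u \rangle + \langle \nabla u, (\nabla^2 V)\,\nabla u \rangle
\end{align*}
against $\mu$, the left-hand side integrates to zero and $\E_\mu \langle \nabla u, \nabla \mc L u \rangle = -\E_\mu[(\mc L u)^2]$, whence
\begin{align*}
    \E_\mu \langle \nabla u, (\nabla^2 V)\,\nabla u \rangle
    &= \E_\mu[(\mc L u)^2] - \E_\mu \lVert \nabla^2 u \rVert_{\mathrm{HS}}^2 \\
    &\le \E_\mu[(\mc L u)^2]
    = \E_\mu[g^2]
    = \var_\mu g\,.
\end{align*}
Combining this with the previous display and canceling a factor of ${(\var_\mu g)}^{1/2}$ (the case $\var_\mu g = 0$ being trivial) yields~\eqref{eq:brascamp_lieb} under the simplifying assumptions; removing them is a matter of truncating and mollifying $g$ and letting $\varepsilon \downarrow 0$, the right-hand side only increasing under these operations.

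I expect the main obstacle to be analytic rather than conceptual: each inequality above is elementary, but one must establish existence, smoothness, and sufficient decay of the solution $u$ to $-\mc L u = g$ on the whole of $\R^n$ under the weak hypotheses on $V$, and carefully justify the integrations by parts --- in particular, the vanishing of $\E_\mu \mc L \lvert \nabla u \rvert^2$. These points are standard in the theory of the Witten Laplacian, and can be circumvented by first proving the inequality when $\mu$ is strongly log-concave with a smooth, slowly growing potential, and then approximating a general $V$ by such potentials.
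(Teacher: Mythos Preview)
Your proposal is correct and follows essentially the same H\"ormander/Bochner route the paper takes: solve the Poisson equation $\ms L u = g$, use integration by parts to write $\var_\mu g = \E_\mu\langle \nabla g, \nabla u\rangle$, and control $\E_\mu\langle \nabla u, (\nabla^2 V)\,\nabla u\rangle$ via the integrated Bochner identity~\eqref{eq:hormander_key}. The only cosmetic difference is that you apply Cauchy--Schwarz in the $\nabla^2 V$-weighted inner product and cancel a factor of ${(\var_\mu g)}^{1/2}$, whereas the paper phrases it via $2\langle x,y\rangle \le \langle x, Ax\rangle + \langle y, A^{-1}y\rangle$; these are equivalent rearrangements of the same estimate.
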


The Brascamp-Lieb inequality is a Poincar\'e inequality for the measure $\mu$ corresponding to the Newton-Langevin diffusion~\cite{chewietal2020mirrorlangevin}.
When $V$ is strongly convex, $\nabla^2 V \succeq \alpha I_n$, it recovers the usual Poincar\'e inequality
\begin{align*}
    \var_\mu g \le \frac{1}{\alpha} \E_\mu[\norm{\nabla g}^2]\,.
\end{align*}
See~\cite{bobkovledoux2000brunnmintobrascamplieblsi, bakrygentilledoux2014, cordero2017transport} for various proofs of~\autoref{thm:brascamp_lieb}.

Since the inequality~\eqref{eq:brascamp_lieb} makes no explicit reference to the dimension, it actually holds in infinite-dimensional space.
In contrast,~\autoref{thm:brascamp_lieb_dim} asserts that~\eqref{eq:brascamp_lieb} can be improved by subtracting an additional non-negative term from the right-hand side in any finite dimension.
This is referred to as a \emph{dimensional improvement} of the Brascamp-Lieb inequality.

\subsection{Proof by H\"ormander's \texorpdfstring{$L^2$}{L\^{}2} method}

We now present the proof of~\autoref{thm:brascamp_lieb_dim} given in~\cite{bolleygentilguillin2018brascamplieb}.
The starting point for H\"ormander's $L^2$ method is to first dualize the Poincar\'e inequality.

\begin{prop}[{\cite[Lemma 1]{barthecorderoerausquin2013invariances}}]\label{prop:hormander}
    Let $\mu \propto \exp(-V)$ be a probability density on $\R^n$, where $V$ is of class $\mc C^1$.
    Define the corresponding generator $\ms L$ on smooth functions $g : \R^n\to\R$ via
    \begin{align*}
        \ms L g
        &:= -\Delta g + \langle \nabla V, \nabla g \rangle\,.
    \end{align*}
    Suppose $A : \R^n\to \on{PD}(n)$ is a matrix-valued function mapping into the space of symmetric positive definite matrices such that for all smooth $u : \R^n\to\R$,
    \begin{align}\label{eq:hormander}
        \E_\mu[{(\ms L u)}^2]
        &\ge \E_\mu\langle \nabla u, A \,\nabla u \rangle\,.
    \end{align}
    Then, for all $g \in L^2(\mu)$, it holds that
    \begin{align*}
        \var_\mu g
        &\le \E_\mu\langle \nabla g, A^{-1} \, \nabla g \rangle\,.
    \end{align*}
\end{prop}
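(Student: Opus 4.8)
The plan is to realize $g - \E_\mu g$ as $\ms L u$ for a suitable test function $u$, and then feed $u$ into the hypothesis~\eqref{eq:hormander}. First I would reduce to the case $\E_\mu g = 0$; since both sides of the claimed inequality are unchanged by adding a constant to $g$, this costs nothing. The operator $\ms L$ is the generator of the Langevin diffusion with invariant measure $\mu$, self-adjoint on $L^2(\mu)$ with the integration-by-parts identity
\begin{align*}
    \E_\mu[(\ms L u)\, v]
    &= \E_\mu \langle \nabla u, \nabla v \rangle
\end{align*}
for smooth $u,v$. The key step is to solve the Poisson equation $\ms L u = g$: because $g$ has zero mean and $\mu$ satisfies a Poincaré inequality (which follows from~\autoref{thm:brascamp_lieb}, or can be assumed as part of the standing hypotheses), there is a solution $u \in L^2(\mu)$, which by elliptic regularity is smooth enough to run the argument (a routine approximation by cutoffs handles the non-compact support).

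Granting the Poisson solution $u$, the computation is a two-line application of Cauchy–Schwarz. On one hand,
\begin{align*}
    \var_\mu g
    &= \E_\mu[g^2]
    = \E_\mu[(\ms L u)\, g]
    = \E_\mu \langle \nabla u, \nabla g \rangle\,,
\end{align*}
using the integration-by-parts identity with $v = g$. On the other hand, writing $\langle \nabla u, \nabla g \rangle = \langle A^{1/2} \nabla u, A^{-1/2} \nabla g \rangle$ and applying Cauchy–Schwarz pointwise, then again in $L^2(\mu)$,
\begin{align*}
    \E_\mu \langle \nabla u, \nabla g \rangle
    &\le \bigl(\E_\mu \langle \nabla u, A\, \nabla u \rangle\bigr)^{1/2}\, \bigl(\E_\mu \langle \nabla g, A^{-1}\, \nabla g \rangle\bigr)^{1/2}\,.
\end{align*}
Now invoke the hypothesis~\eqref{eq:hormander}: $\E_\mu \langle \nabla u, A\, \nabla u \rangle \le \E_\mu[(\ms L u)^2] = \E_\mu[g^2] = \var_\mu g$. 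Substituting this into the previous display gives $\var_\mu g \le (\var_\mu g)^{1/2}\, (\E_\mu \langle \nabla g, A^{-1}\, \nabla g \rangle)^{1/2}$, and dividing through by $(\var_\mu g)^{1/2}$ (trivial if $\var_\mu g = 0$) yields the claim.

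The main obstacle is not the algebra but the analytic justification of the Poisson equation step: one needs existence, uniqueness up to constants, and enough regularity of $u$ to legitimately integrate by parts and to have $\nabla u \in L^2(\mu)$. The standard remedy is to first prove the inequality for $g$ in a dense class (say smooth and compactly supported), solve $\ms L u = g$ via the spectral theorem or the resolvent $(\ms L + \epsilon)^{-1}$ and send $\epsilon \to 0$, control the error terms using~\eqref{eq:hormander} and the $L^2(\mu)$-boundedness of $g$, and finally extend to general $g \in L^2(\mu)$ by density since the right-hand side is lower semicontinuous. For the purposes of this note it suffices to cite~\cite{barthecorderoerausquin2013invariances} for these technical points; the conceptual content is exactly the Cauchy–Schwarz argument above.
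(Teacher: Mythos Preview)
Your proposal is correct and follows essentially the same route as the paper: reduce to mean-zero $g$, solve the Poisson equation $\ms L u = g$ (deferring the analytic justification to~\cite{barthecorderoerausquin2013invariances}), and combine integration by parts with a weighted Cauchy--Schwarz/Young step. The paper writes $\var_\mu g = 2\E_\mu[g\,\ms L u] - \E_\mu[(\ms L u)^2]$, applies the hypothesis to replace $\E_\mu[(\ms L u)^2]$ by $\E_\mu\langle \nabla u, A\,\nabla u\rangle$, and then uses Young's inequality $2\langle x,y\rangle \le \langle x, A x\rangle + \langle y, A^{-1} y\rangle$; you instead apply the $A$-weighted Cauchy--Schwarz to $\E_\mu\langle \nabla u, \nabla g\rangle$ and then the hypothesis, which is a cosmetic reordering of the same idea.

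One small remark: your aside that the Poincar\'e inequality ``follows from~\autoref{thm:brascamp_lieb}'' is mildly circular in the paper's logical flow, since~\autoref{prop:hormander} is being used there as a step toward Brascamp--Lieb; but since you (and the paper) ultimately cite~\cite{barthecorderoerausquin2013invariances, corderoerausquinfradelizimaurey2014bconj} for the existence/regularity of $u$, this does not affect the argument.
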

\begin{proof}
    We may assume $\E_\mu g = 0$.
    This condition is certainly necessary for the equation $\ms L u = g$ to be solvable; in order to streamline the proof, we will assume that a solution $u$ exists.
    (This assumption can be avoided by invoking~\cite{corderoerausquinfradelizimaurey2014bconj} and using a density argument; see~\cite{barthecorderoerausquin2013invariances} for details.)

    Using the integration by parts formula for the generator,
    \begin{align*}
        \E_\mu[g \, \ms L u] = \E_\mu \langle \nabla g, \nabla u \rangle\,,
    \end{align*}
    we obtain
    \begin{align*}
        \var_\mu g
        &= \E_\mu[g^2]
        = 2\E_\mu[g\, \ms L u] - \E_\mu[{(\ms L u)}^2]
        \le 2\E_\mu \langle \nabla g, \nabla u \rangle - \E_\mu\langle \nabla u, A \, \nabla u \rangle\,.
    \end{align*}
    Next, since $2 \, \langle x, y \rangle \le \langle x, A \, x \rangle + \langle y, A^{-1} \, y \rangle$ for all $x,y\in\R^n$, it implies
    \begin{align*}
        \var_\mu g
        &\le \E_\mu\langle \nabla g, A^{-1} \, \nabla g \rangle\,. \qedhere
    \end{align*}
\end{proof}

The key idea now is that the condition~\eqref{eq:hormander} can be verified with the help of the \emph{curvature} of the potential $V$.
Indeed, assume now that $V$ is of class $\mc C^2$ and that $\nabla^2 V \succ 0$.
By direct calculation, one verifies the commutation relation
\begin{align}\label{eq:commutation}
    \nabla \ms L u
    &= (\ms L + \nabla^2 V) \, \nabla u\,.
\end{align}
Hence,
\begin{align}\label{eq:hormander_key}
    \begin{aligned}
        \E_\mu[{(\ms L u)}^2]
        &= \E_\mu \langle \nabla u, \nabla \ms L u \rangle
        = \E_\mu\langle \nabla u, (\ms L + \nabla^2 V) \, \nabla u \rangle \\
        &= \E_\mu\langle \nabla u, \nabla^2 V \, \nabla u\rangle + \E_\mu[\norm{\nabla^2 u}_{\rm HS}^2]\,,
    \end{aligned}
\end{align}
where the last equality follows from the integration by parts formula for the generator applied to each coordinate separately: $\E_\mu\langle \nabla u, \ms L \nabla u \rangle = \E_\mu[\norm{\nabla^2 u}_{\rm HS}^2]$.
Since the second term is non-negative,~\autoref{prop:hormander} now implies the Brascamp-Lieb inequality (\autoref{thm:brascamp_lieb}).

In order to obtain the dimensional improvement of the Brascamp-Lieb inequality (\autoref{thm:brascamp_lieb_dim}), we will imitate the proof of~\autoref{prop:hormander}, only now we will use the additional term $\E_\mu[\norm{\nabla^2 u}_{\rm HS}^2]$ in the above identity.

\begin{proof}[Proof of~\autoref{thm:brascamp_lieb_dim}]
    As before, let $\E_\mu g = 0$.
    However, we introduce an additional trick and consider $u$ not necessarily satisfying $\ms L u = g$; this will help to optimize the bound at the end of the argument.
    Following the computations in~\autoref{prop:hormander} and using the key identity~\eqref{eq:hormander_key}, we obtain
    \begin{align*}
        \var_\mu g
        &= \E_\mu[g^2]
        = \E_\mu[{(g - \ms L u)}^2] + 2\E_\mu[g \, \ms L u] - \E_\mu[{(\ms L u)}^2] \\
        &= \E_\mu[{(g - \ms L u)}^2] + 2\E_\mu\langle \nabla g, \nabla u\rangle - \E_\mu \langle \nabla u, \nabla^2 V \, \nabla u \rangle - \E_\mu[\norm{\nabla u}_{\rm HS}^2] \\
        &\le \E_\mu[{(g - \ms L u)}^2] + \E_\mu\langle \nabla g, (\nabla^2 V){}^{-1} \, \nabla g\rangle - \E_\mu[\norm{\nabla u}_{\rm HS}^2]\,.
    \end{align*}
    For the second term, we use the inequality
    \begin{align*}
        \E_\mu[\norm{\nabla u}_{\rm HS}^2]
        \ge \frac{1}{n} \, {(\E_\mu \Delta u)}^2\,.
    \end{align*}
    From integration by parts,
    \begin{align*}
        \E_\mu \Delta u
        &= \E_\mu\langle \nabla V, \nabla u \rangle
        = \E_\mu[V \, \ms L u]
        = \cov_\mu(g, V) + \E_\mu[V \, (\ms L u - g)]\,.
    \end{align*}
    We now choose $\ms L u = g + a \, (V - \E_\mu V)$ for some $a \ge 0$ to be chosen later.
    For brevity of notation, write $\mb C := \cov_\mu(g, V)$ and $\mb V := \var_\mu V$.
    Then,
    \begin{align*}
        &\var_\mu g - \E_\mu\langle \nabla g, (\nabla^2 V){}^{-1} \,\nabla g \rangle
        \le a^2 \mb V - \frac{1}{n} \, {(\mb C + a\mb V)}^2 \\
        &\qquad = -\frac{\mb V \, (n-\mb V)}{n} \, {\Bigl( a - \frac{\mb C}{n-\mb V} \Bigr)}^2 - \frac{\mb C^2 \mb V}{n \, (n-\mb V)} - \frac{\mb C^2}{n}\,.
    \end{align*}
    Observe that this inequality entails $\mb V \le n$, or else we could send $a\to\infty$ and arrive at a contradiction.
    Optimizing over $a$, we obtain
    \begin{align*}
        \var_\mu g
        &\le \E_\mu\langle \nabla g, (\nabla^2 V){}^{-1} \, \nabla g \rangle - \frac{\mb C^2}{n-\mb V}\,. \qedhere
    \end{align*}
\end{proof}

\subsection{Proof by convexity of the entropy along Bregman divergence couplings}

It is well-known that Poincar\'e inequalities are obtained from linearizing transportation inequalities.
In~\cite{cordero2017transport}, D.\ Cordero-Erausquin obtained the Brascamp-Lieb inequality (\autoref{thm:brascamp_lieb}) by linearizing the following inequality:
\begin{align}\label{eq:bregman_transport_ineq}
    \eu D_V(\rho \mmid \mu)
    &\le \msf{KL}(\rho \mmid \mu)\,, \qquad\text{for all}~\rho \in \mc P(\R^n)\,.
\end{align}
Here, $\mu \propto \exp(-V)$ on $\R^n$; $\mc P(\R^n)$ denotes the space of probability measures on $\R^n$; $\msf{KL}(\cdot \mmid \cdot)$ is the Kullback-Leibler (KL) divergence; and $\eu D_V(\cdot \mmid \cdot)$ is the Bregman divergence coupling cost, defined as
\begin{align*}
    \eu D_V(\rho \mmid \mu)
    &= \inf_{\gamma \in \msf{couplings}(\rho, \mu)} \int D_V(x,y) \, \D \gamma(x,y)\,,
\end{align*}
with
\begin{align*}
    D_V(x,y)
    &:= V(x) - V(y) - \langle \nabla V(y), x-y \rangle\,.
\end{align*}

On the other hand, together with K.\ Ahn in~\cite{ahnchewi2021mirrorlangevin}, the author obtained the transportation inequality~\eqref{eq:bregman_transport_ineq} as a consequence of a convexity principle in optimal transport.
It is therefore natural to ask whether the dimensional Brascamp-Lieb inequality (\autoref{thm:brascamp_lieb_dim}) can be obtained directly from (a strengthening of) this principle.
This is indeed the case, and it is the goal of the present section to describe this argument.

Making the argument fully rigorous, however, would entail substantial technical complications which would detract from the focus of this note.
In any case, a complete proof of the dimensional Brascamp-Lieb inequality is already present in~\cite{bolleygentilguillin2018brascamplieb}. Hence, we will work on a purely formal level and assume that everything is smooth, bounded, etc.
Also, the computations are rather similar to the proof of~\autoref{thm:brascamp_lieb_dim} given in the previous section.
Nevertheless, the argument seems interesting enough to warrant presenting it here.

The main difference with the preceding proof is that the Bochner formula (implicit in the commutation relation~\eqref{eq:commutation}) is replaced by the convexity principle.

\begin{proof}[Proof sketch of~\autoref{thm:brascamp_lieb_dim}]
    Throughout the proof, let $\varepsilon > 0$ be small.
    Let $h$ be bounded and satisfy $\E_\mu h = 0$, so that $\mu_\varepsilon := (1+\varepsilon h) \, \mu$ defines a valid probability density on $\R^n$.
    Our aim is to first strengthen the transportation inequality~\eqref{eq:bregman_transport_ineq}, at least infinitesimally, and then to linearize it.

    Let $(X_\varepsilon, X)$ be an optimal coupling for the Bregman divergence coupling cost $\eu D_V(\mu_\varepsilon \mmid \mu)$.
    In~\cite{ahnchewi2021mirrorlangevin}, the following facts were proven:
    \begin{enumerate}
        \item There is a function $u_\varepsilon : \R^n\to\R$ such that $\nabla V(X) = \nabla V(X_\varepsilon) - \nabla u_\varepsilon(X_\varepsilon)$, and $V - u_\varepsilon$ is convex.
        \item The entropy functional (defined in~\eqref{eq:entropy_functional}) is convex in the sense that
            \begin{align}\label{eq:cvxty_entropy}
                \eu H(\mu_\varepsilon)
                &\ge \eu H(\mu) + \E\langle [\nabla_{W_2} \eu H(\mu)](X), X_\varepsilon-X \rangle\,.
            \end{align}
            Here, $\nabla_{W_2} \eu H(\mu) = \nabla \ln \mu$ is the Wasserstein gradient of the entropy functional, c.f.~\cite{ambrosio2008gradient, villani2009ot, santambrogio2015ot}.
    \end{enumerate}
    Write $T_\varepsilon(x) := {(\nabla V - \nabla u_\varepsilon)}^{-1}(\nabla V(x))$.
    Since ${(T_\varepsilon)}_\# \mu = \mu_\varepsilon$, the change of variables formula implies
    \begin{align}\label{eq:change_of_var}
        \frac{\mu(x)}{\mu_\varepsilon(T_\varepsilon(x))}
        &= \frac{\mu(x)}{\mu(T_\varepsilon(x)) \, (1+\varepsilon h(T_\varepsilon(x)))}
        = \det \nabla T_\varepsilon(x)\,.
    \end{align}
    To linearize this equation, write $u_\varepsilon = \varepsilon u + o(\varepsilon)$ and $T_\varepsilon(x) = x + \varepsilon T(x) + o(\varepsilon)$.
    Then, the definition of $T_\varepsilon$ yields
    \begin{align*}
        \nabla V(x)
        &= (\nabla V - \nabla u_\varepsilon)\bigl(x+\varepsilon T(x) + o(\varepsilon)\bigr) \\
        &= \nabla V(x) + \varepsilon\, \nabla^2 V(x) \, T(x) - \varepsilon \, \nabla u(x) + o(\varepsilon)
    \end{align*}
    which implies
    \begin{align*}
        T_\varepsilon(x)
        &= x + \varepsilon \, [\nabla^2 V(x)]{}^{-1} \, \nabla u(x) + o(\varepsilon)\,.
    \end{align*}
    Taking logarithms and expanding to first order in $\varepsilon$,
    \begin{align*}
        &\ln \mu(x) - \ln \mu(T_\varepsilon(x)) - \ln(1+\varepsilon h(T_\varepsilon(x))) \\
        &\qquad = -\varepsilon \, \langle \nabla \ln \mu(x), [\nabla^2 V(x)]{}^{-1} \, \nabla u(x) \rangle - \varepsilon h(x) + o(\varepsilon) \\
        &\qquad = \varepsilon \, \langle \nabla V(x), [\nabla^2 V(x)]{}^{-1} \, \nabla u(x) \rangle - \varepsilon h(x) + o(\varepsilon)
    \end{align*}
    and
    \begin{align*}
        \ln \det \nabla T_\varepsilon(x)
        &= \ln \det \nabla\bigl({\id} + \varepsilon \, [\nabla^2 V]{}^{-1} \, \nabla u + o(\varepsilon)\bigr)(x) \\
        &= \ln \det\bigl(I_n + \varepsilon \, \nabla([\nabla^2 V]{}^{-1} \, \nabla u)(x) + o(\varepsilon)\bigr) \\
        &= \varepsilon \divergence([\nabla^2 V]{}^{-1} \, \nabla u)(x) + o(\varepsilon)\,.
    \end{align*}
    To interpret this, we introduce a new generator, denoted $\hat{\ms L}$ to avoid confusion with the previous section, defined by
    \begin{align*}
        \hat{\ms L} u
        &:= \divergence([\nabla^2 V]{}^{-1} \, \nabla u) - \langle \nabla V, [\nabla^2 V]{}^{-1} \, \nabla u\rangle\,.
    \end{align*}
    This new generator satisfies the integration by parts formula
    \begin{align*}
        \E_\mu[u \, \hat{\ms L} v]
        &= \E_\mu\langle \nabla u, [\nabla^2 V]{}^{-1} \, \nabla v \rangle\,.
    \end{align*}
    In this notation, the preceding computations yield
    \begin{align*}
        \hat{\ms L} u
        &= -h + o(1)\,.
    \end{align*}

    Next, to strengthen~\eqref{eq:cvxty_entropy}, we repeat the proof.
    From~\eqref{eq:change_of_var},
    \begin{align*}
        \eu H(\mu_\varepsilon)
        &= \int \mu_\varepsilon \ln \mu_\varepsilon
        = \int \mu \ln(\mu_\varepsilon \circ T_\varepsilon)
        = \int \mu \ln \frac{\mu}{\det \nabla T_\varepsilon} \\
        &= \eu H(\mu) - \int \mu \ln \det \nabla T_\varepsilon\,.
    \end{align*}
    From the second-order expansion of $-\ln \det$ around $I_n$,
    \begin{align*}
        &-\int \mu \ln \det \nabla T_\varepsilon \\
        &\qquad \ge -\int \mu \ln \det I_n - \int \mu \, \langle I_n, \nabla T_\varepsilon - I_n \rangle
        + \frac{1}{2} \int \mu \, \norm{\nabla T_\varepsilon - I_n}_{\rm HS}^2 + o(\varepsilon^2) \\
        &\qquad \ge -\int \mu \tr(\nabla T_\varepsilon - I_n) + \frac{1}{2n} \, {\Bigl(\int \mu \tr(\nabla T_\varepsilon - I_d)\Bigr)}^2 + o(\varepsilon^2) \\
        &\qquad = -\int \mu \divergence(T_\varepsilon - {\id}) + \frac{1}{2n} \, {\Bigl(\int \mu \divergence(T_\varepsilon - {\id})\Bigr)}^2 + o(\varepsilon^2) \\
        &\qquad = \int \mu \, \langle \nabla \ln \mu, T_\varepsilon - {\id} \rangle + \frac{1}{2n} \, {\Bigl(\int \mu \, \langle \nabla \ln \mu, T_\varepsilon - {\id} \rangle\Bigr)}^2 + o(\varepsilon^2)\,.
    \end{align*}
    Recalling that $\nabla_{W_2} \eu H(\mu) = \nabla \ln \mu$, we have established
    \begin{align*}
        &\eu H(\mu_\varepsilon) - \eu H(\mu) - \E\langle [\nabla_{W_2} \eu H(\mu)](X), X_\varepsilon-X \rangle \\
        &\qquad\qquad\qquad\qquad\qquad \ge \frac{1}{2n} \, {\Bigl(\int \mu \, \langle \nabla V, T_\varepsilon - {\id} \rangle\Bigr)}^2 + o(\varepsilon^2) \\
        &\qquad\qquad\qquad\qquad\qquad = \frac{\varepsilon^2}{2n} \, {\Bigl(\int \mu \, \langle \nabla V, [\nabla^2 V]{}^{-1} \, \nabla u \rangle\Bigr)}^2 + o(\varepsilon^2) \\
        &\qquad\qquad\qquad\qquad\qquad = \frac{\varepsilon^2}{2n} \, \{\E_\mu[V \, \hat{\ms L} u]\}{}^2 + o(\varepsilon^2)\,.
    \end{align*}

    The next step is to write down the strengthened transportation inequality.
    Indeed, if we add a suitable additive constant to $V$ so that $\mu = \exp(-V)$, then
    \begin{align*}
        \msf{KL}(\mu_\varepsilon \mmid \mu)
        &= \E_{\mu_\varepsilon} V + \eu H(\mu_\varepsilon) \\
        &\ge \underbrace{\E V(X) + \eu H(\mu)}_{=\msf{KL}(\mu \mmid \mu) = 0} + \underbrace{\E\langle [\nabla V + \nabla_{W_2} \eu H(\mu)](X), X_\varepsilon - X \rangle}_{= [\nabla_{W_2} \msf{KL}(\cdot \mmid \mu)](\mu) = 0} \\
        &\qquad{} + \underbrace{\E[V(X_\varepsilon) - V(X) - \langle \nabla V(X), X_\varepsilon - X \rangle]}_{=\eu D_V(\mu_\varepsilon \mmid \mu)} \\
        &\qquad{}+ \frac{\varepsilon^2}{2n} \, {\{\E_\mu[hV]\}}^2 + o(\varepsilon^2) \\
        &\ge \eu D_V(\mu_\varepsilon \mmid \mu) + \frac{\varepsilon^2}{2n} \, {\{\E_\mu[hV]\}}^2 + o(\varepsilon^2)\,.
    \end{align*}

    Finally, it remains to linearize the transportation inequality.
    On one hand, it is classical that
    \begin{align*}
        \msf{KL}(\mu_\varepsilon \mmid\mu)
        &= \frac{\varepsilon^2}{2} \E_\mu[h^2] + o(\varepsilon^2)\,.
    \end{align*}
    On the other hand, we can guess that
    \begin{align*}
        \eu D_V(\mu_\varepsilon \mmid \mu)
        &= \frac{1}{2} \E\langle X_\varepsilon - X, \nabla^2 V(X) \, (X_\varepsilon - X)\rangle + o(\varepsilon^2) \\
        &= \frac{\varepsilon^2}{2} \E_\mu\langle \nabla u, (\nabla^2 V){}^{-1} \, \nabla u \rangle + o(\varepsilon^2) \\
        &\ge \frac{\varepsilon^2}{2} \, \frac{{\{\E_\mu\langle \nabla g, (\nabla^2 V){}^{-1} \, \nabla u \rangle\}}^2}{\E_\mu\langle \nabla g, (\nabla^2 V){}^{-1} \, \nabla g\rangle} + o(\varepsilon^2) \\
        &= \frac{\varepsilon^2}{2} \, \frac{{\{\E_\mu[g \, \hat{\ms L} u]\}}^2}{\E_\mu\langle \nabla g, (\nabla^2 V){}^{-1} \, \nabla g\rangle} + o(\varepsilon^2) \\
        &= \frac{\varepsilon^2}{2} \, \frac{{\{\E_\mu[gh]\}}^2}{\E_\mu\langle \nabla g, (\nabla^2 V){}^{-1} \, \nabla g\rangle} + o(\varepsilon^2)\,.
    \end{align*}
    A rigorous proof of this inequality is given as~\cite[Lemma 3.1]{cordero2017transport}.

    Thus, we obtain
    \begin{align*}
        \frac{1}{2} \, \frac{{\{\E_\mu[gh]\}}^2}{\E_\mu\langle \nabla g, (\nabla^2 V){}^{-1} \, \nabla g\rangle} + \frac{1}{2n} \, {\{\E_\mu[hV]\}}^2
        &\le \frac{1}{2} \E_\mu[h^2] + o(1)\,.
    \end{align*}
    Now we let $\varepsilon \searrow 0$ and choose $h = g + a \, (V - \E_\mu V)$ for some $a\in\R$.
    Writing $\mb C := \cov_\mu(g, V)$ and $\mb V := \var_\mu V$, it yields
    \begin{align*}
        \frac{{(\var_\mu g + a\mb C)}^2}{\E_\mu\langle \nabla g, (\nabla^2 V){}^{-1} \, \nabla g\rangle} + \frac{1}{n} \, {(\mb C + a\mb V)}^2
        &\le \var_\mu g + 2a\mb C + a^2 \mb V\,.
    \end{align*}
    Actually, choosing $a$ to optimize this inequality and simplifying the resulting expression may be cumbersome, so with our foresight from the earlier proof of~\autoref{thm:brascamp_lieb_dim}, we now take $a = \mb C/(n-\mb V)$.
    After some algebra,
    \begin{align*}
        \frac{{(\var_\mu g + \mb C^2/(n-\mb V))}^2}{\E_\mu\langle \nabla g, (\nabla^2 V){}^{-1} \, \nabla g\rangle}
        &\le \var_\mu g + \frac{\mb C^2}{n-\mb V}\,,
    \end{align*}
    which of course yields
    \begin{align*}
        \var_\mu g
        &\le \E_\mu\langle \nabla g, (\nabla^2 V){}^{-1} \, \nabla g\rangle - \frac{\mb C^2}{n-\mb V}\,. \qedhere
    \end{align*}
\end{proof}

\section{A tensorization trick}\label{scn:tensorization}

We begin by verifying that the entropic barrier has the consistency property~\eqref{eq:consistency}.
Let $f_K$ denote the function~\eqref{eq:f}, where we now explicitly denote the dependence on the convex body $K$.
Also, let $f_K^\star$ denote the corresponding entropic barrier.
Then, we see that
\begin{align*}
    f_{K\times K'}(\theta,\theta')
    &= \ln\int_{K\times K'} \exp(\langle \theta, x \rangle + \langle \theta', x' \rangle) \, \D x \, \D x' \\
    &= \ln \int_K \exp{\langle \theta, x \rangle} \, \D x + \ln \int_{K'} \exp{\langle \theta', x' \rangle} \, \D x'
    = f_K(\theta) + f_{K'}(\theta')\,.
\end{align*}
Hence,
\begin{align*}
    f_{K\times K'}^\star(x,x')
    &= \sup_{\theta,\theta' \in \R^n}\{\langle \theta, x \rangle + \langle \theta', x' \rangle - f_K(\theta) - f_{K'}(\theta')\}
    = f_K^\star(x) + f_{K'}^\star(x')\,.
\end{align*}

Finally, we check that the tensorization property automatically improves the bound on the self-concordance parameter of $f_K^\star$ obtained in~\cite{bubeckeldan2019entropic}.

\begin{proof}[Proof of~\autoref{lem:tensorization}]
    Let $\bs x := (x_1,\dotsc,x_k) \in {(\R^n)}^k$.
    By assumption, the self-concordant barrier $\phi_{kn, K^k}$ on $K^k$ satisfies $\phi_{kn,K^k}(\bs x) = \sum_{j=1}^k \phi_{n,K}(x_i)$.
    Also, we are given that
    \begin{align}\label{eq:sc_high_dim}
        \nabla^2 \phi_{kn,K^k}(\bs x)
        &\succeq \frac{1}{\nu(kn)} \, \nabla \phi_{kn, K^k}(\bs x) \, {\nabla \phi_{kn, K^k}(\bs x)}^\T\,.
    \end{align}
    Via elementary calculations,
    \begin{align*}
        \nabla \phi_{kn,K^k}(\bs x)
        &= \bigl(\nabla \phi_{n,K}(x_1),\dotsc,\nabla \phi_{n,K}(x_k)\bigr)
    \end{align*}
    and
    \begin{align*}
        \nabla^2 \phi_{kn,K^k}(\bs x)
        &= \begin{bmatrix} \nabla^2 \phi_{n,K}(x_1) & & \\ & \ddots & \\ && \nabla^2 \phi_{n,K}(x_k) \end{bmatrix}\,.
    \end{align*}
    Let $v \in \R^n$ and let $\bs v := (v,\dotsc,v) \in {(\R^n)}^k$.
    Also, take $x_1=\cdots = x_k = x$.
    By~\eqref{eq:sc_high_dim}, we know that
    \begin{align*}
        k \, \langle v, \nabla^2 \phi_{n,K}(x) \, v \rangle
        &= \langle \bs v, \nabla^2 \phi_{nk,K^k}(\bs x) \, \bs v \rangle
        \ge \frac{1}{\nu(kn)} \, \langle \bs v, \nabla \phi_{kn, K^k}(\bs x) \rangle^2 \\
        &= \frac{k^2}{\nu(kn)} \, \langle v, \nabla \phi_{n,K}(x) \rangle^2
    \end{align*}
    which proves
    \begin{align*}
        \nabla^2 \phi_{n,K}(x)
        &\succeq \frac{k}{\nu(kn)} \, \nabla \phi_{n,K}(x) \, {\nabla \phi_{n,K}(x)}^\T
    \end{align*}
    and gives the claim.
\end{proof}

\begin{proof}[Proof of~\autoref{thm:main}]
    According to~\autoref{thm:entropic_barrier}, we know that the entropic barrier in $n$ dimensions is $(1 + \varepsilon_n) \, n$-self-concordant, with $\varepsilon_n \to 0$ as $n\to\infty$.
    By~\autoref{lem:tensorization}, it is actually $(1+\varepsilon_{kn}) \, n$-self-concordant, for any $k\in\N^+$.
    Let $k\to\infty$ to deduce that it is in fact $n$-self-concordant.
\end{proof}

\RaggedRight{}
\printbibliography{}

\end{document}